\theoremstyle{plain}
\newtheorem{theorem}{Theorem}
\newtheorem{lemma}[theorem]{Lemma}
\newtheorem{proposition}[theorem]{Proposition}
\newtheorem{case}{Case}
\theoremstyle{definition}
\newtheorem{definition}[theorem]{Definition}
\theoremstyle{remark}
\newcommand{\vocab}[1]{\textbf{\color{blue!50!gray}#1}}
\newcommand{\p}{\mathcal{OP}}
\newcommand{\df}{\draw[fill]}
\newcommand{\bt}{\begin{tikzpicture}}
\newcommand{\et}{\end{tikzpicture}}
\providecommand{\keywords}[1]
{
  \small	
  \textbf{\textit{Keywords---}} #1
}
\title{Outerplanar Tur\'an number of a cycle}
\author{Ervin Gy\H{o}ri \thanks{R\'enyi Institute, Budapest,
 Hungary.  Research partially supported by the NKFIH Grant 132696. E-mail: {\tt gyori.ervin@renyi.hu}}
\and  Guilherme~Zeus Dantas~e~Moura \thanks{Haverford College, PA 19041, USA. E-mail: gdantasemo@haverford.edu; zeusdanmou@gmail.com} 
\and Runtian Zhou \thanks{
Davidson College, Davidson, North Carolina 28035
E-mail: {\tt dazhou@davidson.edu}}
}
\begin{document}
\maketitle
\begin{abstract}
A graph is outerplanar if it has a planar drawing for which all vertices belong to the outer face of the drawing. Let $H$ be a graph. The outerplanar Tur\'an number of $H$, denoted by $ex_\p(n,H)$, is the maximum number of edges in an $n$-vertex outerplanar graph which does not contain $H$ as a subgraph. In 2021, L. Fang et al. determined the outerplanar Tur\'an number of cycles and paths. In this paper, we use techniques of dual graph to give a shorter proof for the sharp upperbound of $ex_\p(n,C_k)\leq \frac{(2k - 5)(kn - k - 1)}{k^2 - 2k - 1}$.
\end{abstract}
\keywords{Tur\'an number, Outerplanar graph, Cycle}

\section{Introduction and Main Results}
In this paper, all graphs considered are outerplanar, undirected, finite and contain neither loops nor multiple edges. More specifically, we study outerplanar plane graphs what are embeddings (drawings) of graphs in the plane such that the edge curves do not cross each other; they may share just endpoints, and that all vertices belong to the outerface. We use $C_k$ to denote the cycle of $k$ vertices. We use $n$-face to denote a face with $n$ edges. In particular, we use $(4+)$-innerface to denote an innerface of some outerplanar plane graph with at least $4$ edges. We denote the vertex and the edge sets of a graph $G$ by $V(G)$ and $E(G)$ respectively. We also denote the number of vertices and edges of $G$ by $v(G)$ and $e(G)$ respectively. The minimum degree of $G$ is denoted $\delta(G)$.

The Tur\'an number $ex(n,H)$ for a graph $H$ is the maximum number of edges in an $n$-vertex graph with no copy of $H$ as a subgraph. The first result on the topic of Tur\'an number was obtained by Mantel and Tur\'an, who proved that the balanced complete $r$-partite graph is the unique extremal graph of $ex(n,K_{r+1})$ edges. The Erd\H{o}s-Stone-Simonovits theorem \cite{erdos1963structure,erdos1962number} then generalized this result and asymptotically determined $ex(n,H)$ for all nonbipartite graphs $H:$ $ex(n,H)=(1-{1\over \mathcal{X}(H)-1}){n\choose 2}+o(n^2)$.

In 2016, Dowden et al. \cite{dowden2016extremal} initiated the study of Tur\'an-type problems when host graphs are plane graphs, i.e., how many edges can a plane graph on $n$ vertices have, without containing a given graph as a subgraph? Let $\mathcal{H}$ be a set of graphs. The planar Tur\'an number, $ex_\p(n,\mathcal{H})$, is the maximum number of edges in an $n$-vertex planar graph which does not contain any member of $\mathcal{H}$ as a subgraph. When $\mathcal{H}=\{H\}$ has only one element, we usually write $ex_\p(n,H)$ instead. Dowden et al. \cite{dowden2016extremal} obtained the sharp bounds $ex_{\mathcal{P}}(n,K_4) =3n-6$ for all $n\geq 4$, $ex_{\mathcal{P}}(n,C_4) \leq\frac{15(n-2)}{7}$ for all $n\geq 4$, and $ex_{\mathcal{P}}(n,C_5) \leq\frac{12n-33}{5}$ for all $n\geq 11$. For $k\in \{4,5\}$, let $\Theta_k$ denote the graph obtained from $C_k$ by adding a chord. Y. Lan et al. \cite{lan2019extremal} showed that $ex_\p(n,\Theta_4)\leq {15(n-2)\over 5}$ for all $n\geq 4$, $ex_\p(n,\Theta_5)\leq {5(n-2)\over 2}$ for all $n\geq 5$. The bounds for $ex_\p(n,\Theta_4)$ and $ex_\p(n,\Theta_5)$ are sharp for infinitely many $n$.  The infinitely often sharp upper bound for $ex_\p(n,C_6)$ was proved by D. Ghosh et al. \cite{ghosh2022planar}. They proved $ex_\p(n,C_6)\leq {5n-14\over 2}$ for all $n\geq 18$. Recently, R. Shi et al. \cite{shi2023planar} and E. Gy\H{o}ri et al. \cite{gyori2023planar} independently proved the sharp bound of $ex_\p(n,C_7)\leq {18\over 7}n-{48\over 7}$ for all $n\geq 60$. E. Gy\H{o}ri et al. \cite{gyori2023planark} also asymptotically determined $ex_{\mathcal{P}}(n,\{K_4,C_5\})$, $ex_{\mathcal{P}}(n,\{K_4,C_6)$ and $ex_{\mathcal{P}}(n,\{K_4,C_7\})$.

A graph is outerplanar if it has a planar drawing for which all vertices belong to the outer face of the drawing. Let $H$ be a graph. The outerplanar Tur\'an number of $H$, denoted by $ex_\p(n,H)$, is the maximum number of edges in an $n$-vertex outerplanar graph which does not contain $H$ as a subgraph. The topics of outerplanar Tur\'an number was initiated by L. Fang et al. in \cite{fang2021outerplanar} where they determined the outerplanar Tur\'an numbers of cycles and paths. 

\begin{theorem}\label{t0}
(\cite{fang2021outerplanar}) Let $n,k$ be integers with $k\geq 3$. If $2\leq n\leq k-1$, then $ex_\p(n,C_k)=2n-3$. If $n\geq k$, let $\lambda=\lfloor {kn-2k-1\over k^2-2k-1}\rfloor+1$. Then, $$ex_\p(n,C_k)=\begin{cases}2n-\lambda-2\lfloor {\lambda\over k}\rfloor-3, &\text{if $k|\lambda$}\\ 2n-\lambda-2\lfloor {\lambda\over k}\rfloor-2, &\text{otherwise.}\end{cases}$$
\end{theorem}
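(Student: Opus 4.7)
The plan is to split into two regimes, handling $2 \le n \le k-1$ first. In this range every $n$-vertex outerplanar graph has at most $2n-3$ edges by Euler's formula, with equality on any maximal outerplanar (triangulated) graph, which trivially avoids $C_k$ since it has fewer than $k$ vertices. For the rest of the proof assume $n \ge k$. A standard block-decomposition argument reduces to the case that the extremal graph $G$ is $2$-connected, so that its outer face is a Hamiltonian $n$-cycle and, writing $m = e(G)$, its $f$ inner faces satisfy $f = m - n + 1$ by Euler's formula. The key object is the \emph{weak dual} $T$ of $G$: a tree with one vertex per inner face and one edge per interior edge of $G$. Since $G$ is $2$-connected outerplanar, $T$ is indeed a tree, and $G$ is reconstructed from $T$ together with the face-size sequence $(s_v)_{v \in V(T)}$.

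The central observation is a bijection between cycles of $G$ and subtrees of $T$: if $S \subseteq T$ is a connected subtree whose vertices correspond to inner faces with sizes $s_{v_1}, \dots, s_{v_r}$, then the boundary of $\bigcup_i F_{v_i}$ is a cycle of length $\sum_i s_{v_i} - 2(r-1)$, and every cycle of $G$ arises uniquely in this way. Hence $G$ is $C_k$-free if and only if no subtree of $T$ satisfies $\sum_i s_{v_i} = k + 2(r-1)$. In particular no single inner face has size $k$, and every larger face is constrained in how it can be ``approached'' by adjacent faces along $T$.

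To prove the sharp real-valued upper bound $m \le \frac{(2k-5)(kn - k - 1)}{k^2 - 2k - 1}$, I would run a discharging argument on $T$. Assign each face $v$ the initial charge $s_v - 3$, so that triangles receive charge $0$, $(4+)$-faces receive positive charge, and the total charge is $(2m - n) - 3f = 2n - m - 3$; upper-bounding $m$ is therefore equivalent to lower-bounding the total excess carried by the $(4+)$-faces. The $C_k$-free constraint supplies a local condition: a $(4+)$-face of size $s$ cannot sit too close in $T$ to another $(4+)$-face, for otherwise a small subtree containing both would have boundary length in a window that must avoid $k$; iterating this along paths of alternating large and triangular faces forces each $(4+)$-face to be ``buffered'' by a positive number of triangular faces in each direction along $T$, with the buffer size a prescribed linear function of $k$ and $s$. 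Transferring charge from each large face to its buffer triangles at the rate dictated by this linear relation, the local inequalities telescope across $T$ and yield exactly the claimed bound on $m$.

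The piecewise floor formula then follows by arithmetic: writing $\lambda = \lfloor (kn - 2k - 1)/(k^2 - 2k - 1)\rfloor + 1$, the real-valued bound transforms into $m \le 2n - \lambda - 2\lfloor \lambda/k\rfloor - c$ with $c \in \{2,3\}$ according to whether $k \mid \lambda$, via routine manipulation of floor functions. Sharpness comes from the construction of Fang et al.\ \cite{fang2021outerplanar}: glue together interior $(k-1)$-faces (each internally triangulated from a common boundary vertex) along a dual path, insert single triangles between consecutive $(k-1)$-faces whenever necessary to keep every cumulative boundary length away from $k$, and then triangulate the remaining part of the outer region. The main obstacle is the discharging step: the $C_k$-free condition is a family of non-local inequalities, one per subtree of $T$, and choosing the charge-transfer rates so that the per-face inequalities sum to the sharp global bound requires care at the borderline face-sizes $s \in \{k-1, k+1, 2k-3\}$, where a small perturbation of the surrounding face sequence can produce a cycle of length exactly $k$. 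It is precisely at this step that the weak-dual perspective streamlines the case analysis of the original argument in \cite{fang2021outerplanar}.
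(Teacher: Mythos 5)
First, a point of orientation: the paper does not prove Theorem~\ref{t0} at all --- it is quoted from \cite{fang2021outerplanar}, and the paper's own contributions are the weaker real-valued bound of Theorem~\ref{t1} (proved by induction on $n$ via Lemma~\ref{lemma:innerface-terminal}, not by discharging) and its sharpness only for $n\equiv k-1\pmod{k^2-2k-1}$ (Theorem~\ref{t2}). So your proposal must stand on its own as a proof of the exact formula, which is a strictly stronger statement than anything proved in this paper. The parts of your setup that are correct and genuinely useful: the regime $2\le n\le k-1$, the correspondence between cycles of a $2$-connected outerplanar plane graph and subtrees of its weak dual with cycle length $\sum_i s_{v_i}-2(r-1)$ (this is the clean way to phrase $C_k$-freeness, and it underlies the paper's Proposition~\ref{proposition:edge-maximal:paths} and Lemma~\ref{lemma:innerface-terminal}), and the charge count $\sum_v(s_v-3)=2n-m-3$.

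However, there are two genuine gaps. First, the discharging step --- which is where the entire content of the theorem lives --- is never carried out: you do not specify the charge-transfer rules or the buffer sizes, and you yourself flag the verification at the borderline face sizes as ``the main obstacle.'' As written this is a plan for a proof, not a proof; the constraint coming from subtrees of the weak dual is a large family of non-local conditions, and it is exactly the conversion of these into summable local inequalities that is hard. Second, and more fundamentally, the last step is wrong in principle: the exact formula of Theorem~\ref{t0} does not follow from the real-valued bound of Theorem~\ref{t1} ``via routine manipulation of floor functions.'' A real-valued upper bound $B(n)$ only yields $m\le\lfloor B(n)\rfloor$, whereas the expression $2n-\lambda-2\lfloor\lambda/k\rfloor-c$ is a different staircase (the paper only asserts agreement with Theorem~\ref{t1} on the residue class $n\equiv k-1\pmod{k^2-2k-1}$); since the slope of $B$ is $2-\tfrac{k-2}{k^2-2k-1}<2$ while an outerplanar graph can gain two edges per added vertex, the true extremal function between consecutive tight values is governed by structural constraints, not by the fractional part of $B(n)$, and pinning it down is precisely the extra work in \cite{fang2021outerplanar}. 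Two further holes of the same kind: the matching lower bound for \emph{every} $n\ge k$ is asserted but no construction is given beyond a gesture, and the reduction to the $2$-connected case is not ``standard'' for an exact non-linear formula --- the floor expression is not obviously superadditive under gluing blocks at cut vertices, so that reduction itself needs proof.
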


In this paper, we use techniques of dual graph to prove the following theorem:

\begin{theorem}\label{t1}
Let $n,k$ be integers with $n\geq2$, $k\geq 3$. Then, $$ex_\p(n,C_k)\leq \frac{(2k - 5)(kn - k - 1)}{k^2 - 2k - 1}.$$
\end{theorem}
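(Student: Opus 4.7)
The bound $B(n) := \frac{(2k-5)(kn-k-1)}{k^2-2k-1}$ is affine in $n$, and a routine calculation shows that both block-summation (over blocks joined at cut vertices, using $\sum |V(B_i)| = n + t - 1$) and component-summation preserve the inequality; we may therefore assume $G$ is $2$-connected. Fix an outerplane embedding in which the outer face is bounded by the Hamilton $n$-cycle, and let $T$ denote the weak dual, a tree whose vertices are the inner faces of $G$. Setting $f := |V(T)|$, Euler's formula yields $m = n + f - 1$, while double-counting face lengths gives $\sum_{F \in V(T)}(|F|-2) = n-2$.

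\textbf{Cycle dictionary and reformulation.} Each cycle of $G$ is the boundary of the union of inner faces indexed by a connected subtree $S \subseteq T$, of length $\sum_{F \in S}(|F|-2) + 2$. Consequently $G$ is $C_k$-free iff no subtree $S \subseteq T$ satisfies $\sum_{F \in S}(|F|-2) = k-2$. Substituting $m = n+f-1$ into the target bound and using $\sum_F(|F|-2) = n-2$, a direct rearrangement converts $m \leq B(n)$ into the equivalent face-weighted inequality
\[
\sum_{F \in V(T)} \phi(|F|) \;\leq\; (k-2)(k-3), \qquad \phi(j) := 3k^2-8k+1 - (k^2-3k+1)\, j.
\]
The key properties of $\phi$ are that $\phi(3) = k-2 > 0$, $\phi(j) \leq 0$ for every $j \geq 4$, and $\phi$ is strictly decreasing in $j$.

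\textbf{Induction on $f$.} The base case $f=1$ is $G = C_n$ with $n \neq k$, for which $\phi(n) \leq (k-2)(k-3)$ is a direct check (tight at $(n,k) = (3,4)$). For the inductive step, if $T$ has a leaf face $L$ with $|L| \geq 4$, then $\phi(|L|) \leq 0$; delete $L$ by removing its $|L|-2$ outer vertices (those not on the single edge $L$ shares with its unique dual neighbor) to obtain a $C_k$-free $2$-connected outerplanar graph whose dual tree is $T - L$, and the inductive hypothesis together with $\phi(|L|) \leq 0$ closes the induction.

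\textbf{Main obstacle: all leaves of $T$ are triangles.} Single-leaf deletion now injects positive charge $\phi(3) = k-2$, so a bulk deletion is required. If $T$ contains only triangles, the forbidden-subtree condition prevents $|V(T)| \geq k-2$, forcing $n \leq k-1$ and placing us in the small-$n$ regime where the bound is directly verified. Otherwise iteratively prune leaf triangles of $T$ to obtain a nonempty subtree $T^*$ whose leaves are all non-triangle; pick such a leaf $F^*$, with $t \geq 1$ triangle-leaves of $T$ directly adjacent (when internal triangle chains intervene --- possible only for $k \geq 5$ --- the bulk deletion absorbs the entire triangle branch attached to $F^*$ on that side). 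The forbidden-subtree condition applied to $\{F^*, L_1, \dots, L_{t'}\}$ for $t' = 1, \dots, t$ forces $|F^*| \notin \{k-t, k-t+1, \dots, k-1, k\}$, which combined with $|F^*| \geq 4$ implies the threshold $|F^*| \geq \tfrac{3k^2-8k+1 + t(k-2)}{k^2-3k+1}$ needed for the bulk deletion. Deleting $F^*$ together with its triangle-leaves removes $|F^*|-2+t$ vertices and $|F^*|-1+2t$ edges; the derived arithmetic makes the inductive slack match the deleted charge exactly, and induction closes. I expect this bulk-deletion step to be the most technically delicate part of the argument, both for the case analysis over admissible $(|F^*|, t)$ and for correctly absorbing internal triangle chains.
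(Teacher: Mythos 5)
Your approach is sound and, after checking the arithmetic, it does close; but it is organized quite differently from the paper's proof, so a comparison is worthwhile. The paper runs a strong induction on $n$ over all $C_k$-free outerplanar plane graphs, splitting into four cases: faces of size $\geq k+1$ are handled by cutting the graph into $\ell$ pieces along such a face; a $(4+)$-face of size $\leq k-1$ is handled via a lemma producing a $(4+)$-face for which all but one of the surrounding triangular blocks are \emph{terminal}, whereupon the union $H$ of those blocks (which has $<k$ vertices) is split off and an edge of it is contracted so that both pieces satisfy the induction hypothesis; non-$2$-connected graphs are split at a cut vertex; and the all-triangular $2$-connected case forces $n\leq k-1$. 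Your structural core is the same as that lemma --- a non-triangular leaf $F^*$ of the triangle-pruned dual tree $T^*$ is exactly a face all of whose other dual neighbours lead into pure-triangle branches --- but your bookkeeping is different and arguably cleaner: the reformulation $m\leq B(n)\iff\sum_F\phi(|F|)\leq(k-2)(k-3)$ with $\phi(3)=k-2$ and $\phi(j)\leq 0$ for $j\geq 4$ replaces the paper's split-and-recombine inequalities by a local charge count, the induction runs on the number of faces rather than on $n$, and faces of size $\geq k+1$ need no separate case. One imprecision to repair: in the bulk-deletion step the relevant parameter is not $t$, the number of triangle leaves of $T$ adjacent to $F^*$, but $C$, the \emph{total} number of triangles in all branches of $T-F'$ hanging off $F^*$ (where $F'$ is the unique $T^*$-neighbour of $F^*$), since you delete all of them and the removed charge is $\phi(|F^*|)+C(k-2)$. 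The argument still closes with $C$: each branch carries at most $k-3$ triangles (else it alone contains a connected subtree of $k-2$ triangles, hence a $C_k$), there are at most $|F^*|-1$ branches, and the identity
\[
\phi(s)+(s-1)(k-3)(k-2)=(2k-5)(k+1-s)
\]
shows the worst case is nonpositive whenever $s=|F^*|\geq k+1$; while if $|F^*|\leq k-1$ the forbidden-subtree condition (applied to $F^*$ together with any $j\in\{1,\dots,C\}$ triangles chosen as root-containing subtrees of the branches) forces $|F^*|\leq k-C-1$, hence $C\leq k-5$ and your threshold is at most $4\leq|F^*|$. You should also state separately the degenerate case $T^*=\{F^*\}$, where the deletion empties the graph and one needs the direct bound $\phi(s)+C(k-2)\leq(k-2)(k-3)$ (which follows from the same identity with one extra branch); this is precisely where the construction of Theorem~\ref{t2} sits, so the slack is genuinely zero there.
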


Moreover, for any fixed $k$, we show the bound is sharp for infinitely many $n$:

\begin{theorem}\label{t2}
Let $k\geq 3$ be an integer. Then, for any integer $n$ s.t. $n\equiv k-1\mod k^2-2k-1$, we have $$ex_\p(n,C_k)= \frac{(2k - 5)(kn - k - 1)}{k^2 - 2k - 1}.$$

\end{theorem}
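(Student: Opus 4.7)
\textbf{Proof proposal for Theorem \ref{t2}.} The upper bound $ex_\p(n,C_k)\le(2k-5)(kn-k-1)/(k^2-2k-1)$ is already given by Theorem \ref{t1}. When $n=(k-1)+m(k^2-2k-1)$ for some integer $m\ge 0$, a short algebraic manipulation reduces this to $(2k-5)(1+mk)$, so the plan is to exhibit, for every such $n$, an outerplanar graph $G_{k,m}$ with exactly that many edges and no $C_k$ subgraph.

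The construction is chain-like in $m$. For $m=0$, let $G_{k,0}$ be any triangulated $(k-1)$-gon; it trivially avoids $C_k$ since it has only $k-1$ vertices. For $m\ge 1$, take $m$ vertex-disjoint copies $F_1,\dots,F_m$ of a $(k+1)$-cycle (the \emph{central faces}), and between each consecutive pair $F_i,F_{i+1}$ glue a \emph{bridge}: another triangulated $(k-1)$-gon whose edge is identified with an edge of $F_i$ and whose second edge (non-adjacent to the first, when $k\ge 5$) is identified with an edge of $F_{i+1}$. Finally, to each edge of each $F_i$ that is still free, glue an additional triangulated $(k-1)$-gon as a \emph{pendant} sharing one edge with $F_i$. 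Embed the result with the $F_i$ in a horizontal row and every ear hanging outward, so that all vertices lie on the outer face and the drawing is outerplanar. A routine incidence/Euler count (each pendant contributes $k-3$ new vertices and $2k-6$ new edges; the $m-1$ bridges contribute a correction whose exact size depends on $k$) then yields exactly $(k-1)+m(k^2-2k-1)$ vertices and $(2k-5)(1+mk)$ edges. The small-$k$ cases are notationally awkward but innocuous: for $k=3$ a ``triangulated $2$-gon'' degenerates to a single edge, so the construction reduces to $m$ $4$-cycles sharing consecutive edges; for $k=4$ the bridge is itself a triangle whose two shared edges necessarily meet at a common vertex.

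To verify $C_k$-freeness, first observe that $G_{k,m}$ is $2$-connected (for $k=4$ the third edge of each bridge triangle provides a detour around the only vertex of the bridge that belongs to both neighbouring centrals). Its weak dual $T$ is therefore a tree, and by the Jordan curve theorem every cycle of $G_{k,m}$ bounds a union of inner faces corresponding to a connected subtree $S\subseteq T$ of length $\sum_{F\in S}|F|-2(|S|-1)$. Since each inner face is either one of the central $(k+1)$-cycles or a triangle belonging to some ear, writing $a$ for the number of triangle-nodes and $b$ for the number of central-nodes in $S$, the bounding cycle has length $a+(k-1)b+2$. Setting this equal to $k$ forces $a+(k-1)b=k-2$. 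If $b\ge 1$, the right-hand side is at most $-1$ and hence cannot equal the non-negative $a$; if $b=0$, we would need $k-2$ triangles in a connected subtree of $T$, but triangles of distinct ears are joined in $T$ only through central nodes, and a single ear contains only $k-3$ triangles. Hence no subtree of $T$ has boundary length $k$, so $G_{k,m}$ contains no $C_k$.

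The main obstacle I anticipate is the geometric bookkeeping at the interfaces between modules: verifying that the bridges and pendants can be glued simultaneously into one outerplanar embedding, and that the overall graph remains $2$-connected (so that the dual-tree description of cycles genuinely applies). The degenerate low-$k$ cases $k\in\{3,4\}$, where a bridge carries few or no internal vertices, require this check to be performed by hand. Once the interface analysis is settled, the vertex and edge counts are purely arithmetic and the absence of $C_k$ reduces to the single inequality $a+(k-1)b\ne k-2$ displayed above.
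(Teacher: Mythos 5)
Your construction is essentially the paper's: the paper builds the extremal graph inductively by repeatedly gluing, along a boundary edge, a block $H$ consisting of one $(k+1)$-face with $k$ triangulated $(k-1)$-gons attached to its edges; unwinding that induction gives exactly your chain of central $(k+1)$-faces joined by ``bridge'' $(k-1)$-gons with pendant $(k-1)$-gons on the remaining edges, and your vertex/edge counts $n=(k-1)+m(k^2-2k-1)$, $e=(2k-5)(1+mk)$ agree with the paper's (the paper's displayed formula for $|V_m|$ has an off-by-one in the index $m$, but the identity $|E_m|=(2k-5)(k|V_m|-k-1)/(k^2-2k-1)$ is what matters). The one genuine difference is in your favor: the paper's proof of Theorem~\ref{t2} only presents the construction and the counts and never verifies that the graph is $C_k$-free, whereas you prove this via the weak-dual tree, showing that a cycle enclosing $a$ triangles and $b$ central faces has length $a+(k-1)b+2$, which cannot equal $k$ since $b\geq 1$ forces $a<0$ and $b=0$ forces $a=k-2$ triangles in a connected subtree, impossible as each $(k-1)$-gon carries only $k-3$ triangles and triangles of distinct gons are separated by central nodes. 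That argument is correct (it needs only that the graph is $2$-connected, that the enclosed faces of any cycle form a subtree of the weak dual, and that two innerfaces share at most one edge, all of which hold here), and it closes the gap the paper leaves open; your remaining worries about the gluing interfaces and the degenerate cases $k\in\{3,4\}$ are routine and resolve exactly as you describe.
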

It turns out that Theorem $\ref{t2}$ agrees with Theorem \ref{t0}.

\section{Definitions and Preliminaries}

\begin{definition}[Outerplanar plane graph]
	An \vocab{outerplanar plane graph} is an embedding(drawing) of a graph in which the edge curves do not cross each other and all vertices belong to the outerface.
\end{definition}

\begin{proposition} \label{proposition:edge-maximal:characterization}
	Let \(G = (V, E)\) be an outerplanar plane graph. Then, the following are equivalent:
	\begin{itemize}
		\item \(G\) is an \vocab{edge-maximal outerplanar plane graph}, i.e., no edge can be added to \(G\) while preserving outerplanarity;
		\item \(G\) is \(2\)-connected and all of its innerfaces are triangular;
		\item \(|E| = 2|V| - 3\).
	\end{itemize}
\end{proposition}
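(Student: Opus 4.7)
The plan is to establish the equivalence by proving the cycle of implications $(1) \Rightarrow (2) \Rightarrow (3) \Rightarrow (1)$, with the Euler-formula argument doing the main quantitative work.

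First I would handle $(1) \Rightarrow (2)$. Assume $G$ is edge-maximal outerplanar. For 2-connectivity I would argue by contradiction: if $G$ is disconnected, pick any two vertices from different components lying on the outer face and route a new edge through the outer region, contradicting maximality; if $G$ has a cut vertex $v$, the outer face passes through $v$ at least twice, so one can choose two outer-face vertices on different sides of $v$ and join them by an edge drawn inside a region of the outer face, again contradicting maximality. For the triangular-innerface claim, suppose some innerface $F$ has boundary walk of length at least $4$. Because $F$ is bounded in a plane drawing and all vertices lie on the outer face, two vertices of $F$ that are non-adjacent on the walk can be joined by a chord drawn entirely inside $F$; the resulting graph is still outerplanar, contradicting maximality.

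Next, for $(2) \Rightarrow (3)$: since $G$ is 2-connected and outerplanar, the outer face is bounded by a Hamilton cycle, hence contributes exactly $n := |V|$ to the total face-length. Let $f$ be the total number of faces and let $t = f - 1$ be the number of innerfaces, each a triangle by hypothesis. Double-counting edge-face incidences gives $2|E| = n + 3t$, and Euler's formula gives $n - |E| + (t+1) = 2$, i.e.\ $t = |E| - n + 1$. Substituting yields $2|E| = n + 3(|E| - n + 1)$, which simplifies to $|E| = 2|V| - 3$.

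Finally, for $(3) \Rightarrow (1)$: one needs the classical bound $e(H) \le 2v(H) - 3$ for any outerplanar graph $H$ on at least $2$ vertices. This follows by taking any outerplanar plane embedding of $H$, adding chords until every innerface is a triangle (which preserves outerplanarity, using the same local argument as in $(1) \Rightarrow (2)$), and then applying the Euler computation from $(2) \Rightarrow (3)$ to the resulting edge-maximal supergraph. Since $G$ already attains this bound, no edge can be added while keeping the graph outerplanar, so $G$ is edge-maximal.

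The main obstacle is the innerface-triangulation step in $(1) \Rightarrow (2)$: one must ensure that a chord inside a $(4{+})$-innerface can be chosen whose endpoints still lie on the outer face of the new embedding. This is where the outerplanarity hypothesis is used essentially, rather than just planarity, and care is needed because the new chord could in principle separate some vertex from the outer region. The cleanest fix is to pick the chord between two vertices that are consecutive but one apart on the facial walk of $F$, which keeps the resulting drawing outerplanar by inspection.
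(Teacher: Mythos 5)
Your proof is essentially correct, and it is the standard argument for this classical characterization of maximal outerplanar graphs; note that the paper states this proposition without proof, treating it as known, so there is nothing to compare against beyond the folklore argument you reconstruct. Two small points deserve tightening. First, in the step $(3)\Rightarrow(1)$ you describe the completion of an arbitrary outerplanar graph $H$ to an edge-maximal one as ``adding chords until every innerface is a triangle,'' but if $H$ is disconnected or has a cut vertex this alone does not produce a graph to which your Euler computation applies; you must also add the outer-face edges from your connectivity argument in $(1)\Rightarrow(2)$, i.e., extend $H$ to an edge-maximal supergraph $H'$, then invoke $(1)\Rightarrow(2)\Rightarrow(3)$ for $H'$ to get $e(H)\leq e(H')=2v(H)-3$. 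Second, in the triangulation step one must rule out that the intended chord $v_1v_3$ of a $(4+)$-innerface $F=v_1v_2\cdots v_\ell$ is already an edge of $G$ drawn outside $F$: if it were, the theta-subgraph formed by $F$'s boundary and that edge would have $v_2$ (or the arc $v_4,\dots,v_\ell$) absent from the closure of its unique unbounded face, contradicting outerplanarity; you flag this as the delicate point but the Jordan-curve argument is worth recording. Finally, the statement itself is slightly off in the degenerate case $|V|=2$ (a single edge is edge-maximal with $|E|=2|V|-3$ but is not $2$-connected under the usual convention); this is a wrinkle in the paper's formulation, consistent with its later use of trivial triangular blocks, rather than a defect of your proof.
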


\begin{proposition} \label{proposition:edge-maximal:paths}
	Let \(G=(V,E)\) be an edge-maximal outerplanar plane graph.
	Let \((uv)\in E\) be an edge of \(G\) on the outerface. 
	Then, for every \(1 \leq i \leq |V| - 1\), there exists a path between \(u\) and \(v\) of length \(i\).
	Moreover, for every \(3 \leq i \leq |V|\), there exists a cycle of length \(i\).
\end{proposition}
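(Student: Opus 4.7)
My plan is to prove both claims simultaneously by induction on $n=|V|$, relying on Proposition~\ref{proposition:edge-maximal:characterization} to say that every such $G$ is $2$-connected with only triangular inner faces (so its outer boundary is a Hamilton cycle) and has $2n-3$ edges. The base cases $n=2$ (a single edge $uv$) and $n=3$ (a triangle $uvw$, giving the two paths $uv$ and $uwv$, and the triangle itself) are immediate.

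For the inductive step with $n \geq 4$, the first task is to locate a vertex of degree $2$ outside $\{u,v\}$. Since the weak dual of $G$ is a tree on $n-2 \geq 2$ nodes, it has at least two leaves, each a triangle of $G$ whose ``tip'' vertex has degree $2$ in $G$; call these tips $x$ and $y$. If both $x$ and $y$ were in $\{u,v\}$, then the two leaf triangles would each contain the outer edge $uv$, hence coincide, contradicting that they are distinct leaves. So some ear tip, say $x$, lies outside $\{u,v\}$.

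With such an $x$ in hand, let its two neighbors be $x_1,x_2$ (necessarily connected by a chord of $G$). I would then argue that $G' = G - x$ has $n-1$ vertices, $2(n-1)-3$ edges, and still has all inner faces triangular, so by Proposition~\ref{proposition:edge-maximal:characterization} it is edge-maximal outerplanar. Moreover $uv$ remains on its outer face because $x \neq u,v$. Applying the inductive hypothesis to $(G', uv)$ gives $u$-$v$ paths in $G'$ (hence in $G$) of every length $1 \leq i \leq n-2$. To get the missing length $i = n-1$, I would just follow the outer Hamilton cycle of $G$ from $u$ to $v$ in the direction avoiding the edge $uv$.

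The cycle statement would then follow directly: for each $3 \leq i \leq n$, combine the $u$-$v$ path of length $i-1 \geq 2$ (which cannot use the edge $uv$, since any such path would have length exactly $1$) with the edge $uv$ to obtain a cycle of length $i$. The only step that really requires care is the leaf-tip argument guaranteeing an ear outside $\{u,v\}$ in the inductive step; everything afterward is bookkeeping.
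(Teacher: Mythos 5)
The paper states Proposition~\ref{proposition:edge-maximal:paths} without proof, treating it as a standard fact, so there is no authorial argument to compare against; judged on its own, your proof is correct and is the standard one. The induction via deleting an ear is sound: Proposition~\ref{proposition:edge-maximal:characterization} gives $n-2$ triangular inner faces whose weak dual is an acyclic graph with at least two nodes of degree at most one, each such leaf triangle has a tip of degree $2$ in $G$ (a chord at the tip would separate its two boundary edges in the rotation, so none exists), and your observation that a tip in $\{u,v\}$ forces its triangle to contain the edge $uv$ — which bounds only one inner face — correctly rules out both tips lying in $\{u,v\}$. The remaining steps (edge count certifying that $G-x$ is again edge-maximal via Proposition~\ref{proposition:edge-maximal:characterization}, the Hamiltonian outer cycle supplying the length-$(n-1)$ path, and closing paths of length $i-1\geq 2$ with the edge $uv$ to get cycles) are all valid.
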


\begin{definition}[Weak dual graph]
	Let \(G\) be an outerplanar plane graph.
	The \vocab{weak dual graph of \(G\)} is the graph whose vertices correspond to the innerfaces of \(G\) and two vertices are connected if and only if their corresponding innerfaces are adjacent. More precisely, the weak dual graph is obtained from the dual graph of $G$ by removing the outerface vertex. Because $G$ is outerplanar, its weak dual graph has no double edges or self-loops.
\end{definition}

\begin{proposition} \label{proposition:weak-dual-acyclic}
	Let \(G\) be an outerplanar plane graph.
	The weak dual graph of \(G\) is acyclic.
\end{proposition}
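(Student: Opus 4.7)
The plan is to argue by contradiction using the Jordan curve theorem. Suppose the weak dual contains a cycle \(f_1 f_2 \cdots f_m f_1\) with \(m \geq 3\), and let \(e_i\) denote the interior edge of \(G\) shared by the consecutive faces \(f_i\) and \(f_{i+1}\) (indices mod \(m\)). The edges \(e_1,\ldots,e_m\) are distinct interior edges of \(G\) and \(f_1,\ldots,f_m\) are distinct inner faces.

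The first step is to build a simple closed curve \(C\) in the plane. In each face \(f_i\) pick a point \(p_i\) in its interior, and on each edge \(e_i\) pick a point \(q_i\) in its relative interior. Since each inner face is homeomorphic to an open disk and \(q_{i-1}, q_i\) lie on the boundary of \(f_i\), one can draw two non-crossing arcs inside \(\overline{f_i}\), one from \(q_{i-1}\) to \(p_i\) and one from \(p_i\) to \(q_i\), meeting only at \(p_i\). Concatenating these arcs around \(i = 1,\ldots,m\) yields a simple closed curve \(C\) that crosses each \(e_i\) transversally at \(q_i\) and is otherwise disjoint from the \(1\)-skeleton of \(G\).

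Next, by the Jordan curve theorem, \(C\) separates the plane into a bounded region \(I\) and an unbounded region \(E\). Two observations then close the argument. First, \(C\) lies in the union of the open faces \(f_1,\ldots,f_m\) together with the relative interiors of \(e_1,\ldots,e_m\); it is therefore disjoint from the outerface of \(G\). The outerface, being a connected unbounded open set disjoint from \(C\), must lie entirely in \(E\), and hence every vertex on its boundary also lies in \(E\) (each such vertex is in the closure of the outerface but not on \(C\)). Second, because \(C\) crosses \(e_1\) transversally at a single interior point, the two endpoints of \(e_1\) lie on opposite sides of \(C\); in particular, at least one endpoint of \(e_1\) lies in \(I\). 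This vertex is then not on the outerface, contradicting the outerplanarity of \(G\).

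The main obstacle I anticipate is the topological bookkeeping in constructing \(C\). One must verify that the arcs inside each face can be chosen non-crossing (which uses that each inner face is an open topological disk) and that the resulting \(C\) stays inside \(\bigcup_i \overline{f_i}\) so that it avoids the outerface entirely. Both are standard consequences of plane embeddings but must be invoked with care; once they are in hand the Jordan curve theorem delivers the contradiction immediately.
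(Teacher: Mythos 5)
Your argument is correct. The paper states Proposition~\ref{proposition:weak-dual-acyclic} without proof, treating it as a known structural fact about outerplanar graphs, so there is no in-paper argument to compare against; your Jordan-curve proof is a sound and fairly standard way to justify it. The key points all check out: the edges \(e_1,\dots,e_m\) are pairwise distinct because an edge lies on the boundary of at most two faces and the unordered pairs \(\{f_i,f_{i+1}\}\) are distinct for a cycle of length \(m\geq 3\) in a simple graph (and \(m\geq 3\) is guaranteed since the weak dual, as defined in the paper, is simple); the curve \(C\) avoids the open outer face and the vertex set, so the outer face and hence every vertex in its closure sits in the unbounded component; and the single transversal crossing of \(e_1\) forces one endpoint of \(e_1\) into the bounded component, contradicting outerplanarity. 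One cosmetic remark: you do not actually need each inner face to be an open disk --- all you use is that each face is a connected open set whose boundary points \(q_{i-1},q_i\) (interior points of edges) are accessible from the face, which lets you join them by a simple arc through \(p_i\); stating it this way avoids having to verify simple connectivity of the faces.
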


\begin{definition}[Triangular block]
	Let \(G\) be an outerplanar plane graph.
	A \vocab{triangular block of \(G\)} is a outerplanar plane subgraph \(B\) of \(G\) such that
	\begin{itemize}
		\item \(B\) is an edge-maximal outerplanar plane graph, and;
		\item there is no subgraph \(B'\) of \(G\) such that
			\begin{itemize}
				\item \(B'\) is an edge-maximal outerplanar plane graph, and
				\item \(B\) is a subgraph of \(B'\).
			\end{itemize}
	\end{itemize}
\end{definition}

We remark that triangular blocks may have \(2\) vertices, \(1\) edge and no innerface; these triangular blocks are called \vocab{trivial triangular blocks}.

\begin{proposition}
	Let \(G\) be an outerplanar plane graph.
	Each edge of \(G\) is in exactly one triangular block of \(G\).
	Hence, the collection of triangular blocks of \(G\) is a partition of the edges of \(G\).
\end{proposition}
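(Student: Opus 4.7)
The plan is to parametrize the triangular blocks via the connected components of the weak dual of \(G\) restricted to its triangular innerfaces. The foundational observation is that if \(B\) is any outerplanar subgraph of \(G\) and \(\Delta\) is a triangular innerface of \(B\), then \(\Delta\) is in fact an innerface of \(G\): outerplanarity of \(G\) forces every vertex of \(G\) onto the outerface, so no vertex of \(G\) can lie strictly inside \(\Delta\), and then no edge of \(G\) can dip into the interior of \(\Delta\) either (such an edge would have to exit through a bounding edge of \(\Delta\), contradicting planarity of \(G\)'s embedding), so the interior of \(\Delta\) is empty and \(\Delta\) is an innerface of \(G\). Existence of a triangular block containing each edge \(e\) is then immediate: the two-vertex subgraph consisting of \(e\) with its endpoints is trivially edge-maximal outerplanar (it has \(|E|=1=2|V|-3\)), and by finiteness of \(G\) a maximum-inclusion edge-maximal outerplanar subgraph of \(G\) containing \(e\) exists, which is a triangular block by definition.

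For uniqueness and the partition structure, I would let \(\mathcal{W}\) be the weak dual of \(G\) (a forest by Proposition \ref{proposition:weak-dual-acyclic}) and let \(\mathcal{W}_\triangle\) be its induced subgraph on the triangular innerfaces of \(G\). Let \(T_1,T_2,\ldots\) be the connected components of \(\mathcal{W}_\triangle\), and for each \(i\) let \(B_i\) be the subgraph of \(G\) formed by the union of all edges bounding some triangle in \(T_i\) (with their endpoints). I would prove each \(B_i\) is edge-maximal outerplanar via Proposition \ref{proposition:edge-maximal:characterization}: \(2\)-connectedness follows by induction on \(|T_i|\), attaching the triangles one at a time through the unique shared edge with the subgraph built so far; and every innerface of \(B_i\) is triangular by the forest property of \(\mathcal{W}\), since any non-triangular innerface \(f\) of \(G\) can have at most one of its neighbors in the connected subtree \(T_i\) (otherwise a path in \(T_i\) together with \(f\) would form a cycle in \(\mathcal{W}\)), so at most one of \(f\)'s bounding edges lies in \(B_i\), and \(f\) cannot appear as an innerface of \(B_i\). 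Hence the innerfaces of \(B_i\) are exactly the triangles in \(T_i\).

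It remains to identify the triangular blocks of \(G\) with the \(B_i\)'s plus the trivial blocks \(\{e\}\) for edges \(e\) on no triangular innerface of \(G\). If \(B\) is a non-trivial triangular block, all of its innerfaces are triangular innerfaces of \(G\) by the foundational observation; they form a connected subtree of \(\mathcal{W}_\triangle\) (since \(B\)'s weak dual is a connected tree, because \(B\) is \(2\)-connected outerplanar), so they lie in a single component \(T_i\), giving \(B\subseteq B_i\) and thus \(B=B_i\) by the maximality of \(B\). If \(B\) is trivial, \(B=\{e\}\), then \(e\) cannot lie on any triangular innerface of \(G\), else that triangle would be a strictly larger edge-maximal outerplanar subgraph of \(G\) containing \(B\), contradicting maximality. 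Conversely each \(B_i\) is itself a triangular block: any edge \(e^*\in E(B')\setminus E(B_i)\) in a strictly larger edge-maximal outerplanar \(B'\) lies on a triangular innerface \(t^*\) of \(B'\) and hence of \(G\), and the path in \(B'\)'s weak dual from \(t^*\) to a triangle of \(T_i\) would extend \(T_i\) in \(\mathcal{W}_\triangle\), contradicting its maximality as a component; the trivial blocks are maximal by the same argument. Since each edge of \(G\) lies on the boundary of at most one component \(T_i\) (or on none), the triangular blocks form a partition of \(E(G)\).

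The main obstacle I anticipate is the innerface-triangularity verification for \(B_i\) in the second paragraph, where the forest structure of \(\mathcal{W}\) must be invoked precisely. The foundational observation that triangular innerfaces of outerplanar subgraphs of \(G\) are automatically innerfaces of \(G\) itself is the technical bridge that makes this weak-dual parametrization consistent; once it is in place, the rest of the proof reduces to standard maximality arguments inside the weak-dual forest.
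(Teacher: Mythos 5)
The paper states this proposition without any proof, so there is nothing to compare your argument against; I can only assess it on its own terms. Your strategy --- parametrizing the nontrivial triangular blocks by the connected components of the weak dual restricted to triangular innerfaces, and handling the remaining edges as trivial blocks --- is sound, and the ``foundational observation'' (a triangular innerface of a plane subgraph of \(G\) is automatically an innerface of \(G\)) is exactly the right bridge. The existence argument, the identification \(B=B_i\) for nontrivial blocks, the maximality of each \(B_i\), and the disjointness of the \(B_i\)'s are all correct as written.

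The one step that is genuinely incomplete is the verification that every innerface of \(B_i\) is a triangle of \(T_i\) --- the step you yourself flag as the main obstacle. Your argument rules out a non-triangular innerface \(f\) \emph{of \(G\)} occurring as an innerface of \(B_i\), but an innerface of the subgraph \(B_i\) need not be a single face of \(G\): a priori it could be a union of several \(G\)-faces glued along edges of \(G\setminus B_i\), and such a composite region would be a \((4+)\)-innerface of \(B_i\) destroying edge-maximality. This possibility is excluded by the same forest argument, but it must be said: let \(f\) be an innerface of \(B_i\). Since all three edges of each triangle of \(T_i\) lie in \(B_i\), no triangle of \(T_i\) can sit strictly inside \(f\); so if \(f\) is not itself a triangle of \(T_i\), then every boundary edge \(e\) of \(f\) has its witnessing triangle \(t_e\in T_i\) on the side opposite to \(f\). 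The \(G\)-faces contained in \(f\) are all innerfaces of \(G\) (the outer face of \(G\), being connected and unbounded, lies in the outer face of \(B_i\)) and they form a connected subgraph \(S\) of the weak dual, disjoint from \(T_i\). Taking two distinct boundary edges \(e_1,e_2\) of \(f\) and concatenating a path in \(S\) between the faces inside \(f\) incident to them with a path in \(T_i\) between \(t_{e_1}\) and \(t_{e_2}\) produces a cycle in the weak dual (the degenerate coincidences are excluded because adjacent innerfaces of an outerplanar graph share exactly one edge), contradicting Proposition~\ref{proposition:weak-dual-acyclic}. With this paragraph inserted, your proof is complete.
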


\begin{definition}[Terminal triangular block]
	Let \(G\) be an outerplanar plane graph.
	A \vocab{terminal triangular block of \(G\)} is a triangular block of \(G\) that shares edges with at most one $(4+)$-innerface of \(G\).
\end{definition}

\begin{lemma} \label{lemma:innerface-terminal}
	Let \(G\) be an outerplanar plane graph with at least one $(4+)$-innerface.
	Then, there exists an innerface of \(G\) of size \(\ell \geq 4\) such that at least \(\ell - 1\) of the triangular blocks containing each of its \(\ell\) edges are terminal triangular blocks.
\end{lemma}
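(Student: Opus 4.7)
The plan is to translate the statement into a problem about a contracted weak-dual forest and to locate the desired face as a red leaf of a pruned subforest.

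Starting from the weak dual forest $T$ of $G$ (Proposition~\ref{proposition:weak-dual-acyclic}), for each non-trivial triangular block $B$ I would contract the maximal connected set of triangle-vertices of $T$ sitting inside $B$ to a single vertex. This yields a forest $T^*$ whose vertices are of two types: \emph{red} vertices, corresponding to the $(4+)$-innerfaces of $G$, and \emph{blue} vertices, corresponding to the non-trivial triangular blocks of $G$. Because distinct triangular blocks share no edges, $T^*$ has no blue-blue edges. Because any two innerfaces of $G$ share at most one edge and because two triangles of the same block both adjacent to a common red vertex would close a cycle in $T$, the forest $T^*$ is simple.

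Next I would translate the notion of a terminal block into a degree condition in $T^*$. Fix a red vertex $F$ whose face has $\ell \geq 4$ edges. Each such edge is of exactly one of three types: (a) on the outer face (a trivial, hence terminal, block); (b) between $F$ and another $(4+)$-innerface $F'$ (a trivial non-terminal block, appearing as a red-red edge $FF'$ of $T^*$); (c) between $F$ and a triangle of some non-trivial block $B$ (a red-blue edge $FB$ of $T^*$). A non-trivial block $B$ is terminal iff $\deg_{T^*}(B) \leq 1$, so the number $d(F)$ of edges of $F$ whose block is non-terminal equals the number of $T^*$-neighbors of $F$ that are either red or blue of $T^*$-degree at least $2$. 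The lemma is thus equivalent to finding a red $F$ with $d(F) \leq 1$.

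Finally, I would delete every blue leaf of $T^*$ to obtain a forest $T^{**}$. Because blue vertices have only red neighbors and red vertices are never deleted, no new blue leaves are created, so one round of deletion suffices; every surviving blue vertex retains its original degree $\geq 2$, so every leaf of $T^{**}$ is red and $\deg_{T^{**}}(F) = d(F)$ for each red $F$. Since $T^{**}$ contains every red vertex of $T^*$ and is nonempty by hypothesis, any component of $T^{**}$ containing a red vertex has either a single red vertex ($d(F) = 0$) or at least two red leaves ($d(F) = 1$), yielding the desired $F$. The main obstacle I anticipate is the bookkeeping in the first two steps: using the acyclicity of $T$ and the edge-maximality characterization of Proposition~\ref{proposition:edge-maximal:characterization} to confirm that $T^*$ is a simple forest and that ``terminal'' corresponds exactly to $\deg_{T^*}(B) \leq 1$; once those are in place, the pruning argument is immediate.
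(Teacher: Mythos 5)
Your proof is correct and follows essentially the same route as the paper: both build the incidence forest between $(4+)$-innerfaces and triangular blocks by contracting the triangle-clusters of the weak dual, delete the terminal (degree $\le 1$) block vertices, and observe that a leaf of the resulting forest must be a face vertex, which is the desired face. The only differences are bookkeeping (you encode trivial blocks as red--red edges or omit them rather than keeping them as vertices, and you treat the isolated-red-vertex case explicitly), so no further comparison is needed.
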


\begin{proof}
	Let \(H\) be the bipartite graph between set of vertices corresponding to $(4+)$-innerfaces of \(G\) and the set of vertices corresponding to triangular blocks of \(G\),
	where we connect a $(4+)$-innerface vertex and a triangular block vertex if, and only if, the $(4+)$-innerface and the triangular block share an edge.

	The graph \(H\) can be obtained from the weak dual graph of \(G\) by contracting all edges between triangles and adding vertices corresponding to trivial triangular blocks of $G$ which is either a leaf (when it's on the outerface), or the middle point of an edge in the weak dual (when it's shared by two innerfaces).
	Proposition~\ref{proposition:weak-dual-acyclic} implies that the weak dual graph of \(G\) is acyclic, hence \(H\) is also acyclic.

	Let \(H'\) be obtained from \(H\) by deleting all vertices corresponding to terminal triangular blocks,
	i.e., by deleting all triangular block vertices of degree at most \(1\) in \(H\).
	Note that the degree of any triangular block vertex in \(H'\) equals its degree in \(H\).
	Therefore, any triangular block vertex in \(H'\) has degree at least \(2\) and, consequently, it is not a leaf in \(H'\).

	Since \(H\) is acyclic, \(H'\) is acyclic.
	Therefore, \(H'\) has a leaf.
	Since no triangular block vertex of \(H'\) is a leaf, there exists a $(4+)$-innerface vertex that is a leaf in \(H'\).
	Therefore, at least \(\ell - 1\) of the \(\ell\) neighbors of this innerface vertex in \(H\) correspond to terminal triangular blocks in $G$, as desired.
\end{proof}

\section{Proof of Theorem \ref{t1}}

\begin{proof}
	We use strong induction on \(n\).
	The base case is $n=2$. Then, $$e\leq 1\leq \frac{(2k - 5)(kn - k - 1)}{k^2 - 2k - 1}={(2k-5)(k-1)\over k^2-2k-1}=1+{(k-2)(k-3)\over k^2-2k-1}.$$ Since $k\geq 3$, we know ${(k-2)(k-3)\over k^2-2k-1}\geq 0$ so the base case holds.

	Let \(G = (V, E)\) be a \(C_k\)-free outerplanar plane graph with \(n\) $(n\geq 2)$ vertices and \(e\) edges.
	Assume, by induction hypothesis, that any \(C_k\)-free outerplanar graph \(G'\) with \(n'\) $(2\leq n'<n)$ vertices and \(e'\) edges satisfies 
	\begin{equation}
		e'(k^2 - 2k - 1) \leq (2k - 5) (kn'-k - 1).
	\end{equation} 

	Note that \(G\) has no innerface of size exactly \(k\).

	\begin{case}
		If \(G\) has an innerface of size at least \(k + 1\), then
		\begin{equation}
			e(k^2 - 2k - 1) \leq (2k - 5) (kn - k - 1).
		\end{equation} 
	\end{case}

	\begin{proof}
		Assume \(G\) has an innerface of size \(\ell\), with \(\ell \geq k + 1\).
		Let \(v_1, \dots, v_\ell\) be the vertices on this face.
		Let \(G_1 = (V_1, E_1), \dots, G_\ell = (V_\ell, E_\ell)\) be plane subgraphs such that
		\(V_i \cap V_{i+1} = \{v_i\}\) for all indices \(1\leq i<\ell\),
		\(V_\ell \cap V_1=\{v_\ell\}\),
		\(V_i \cap V_j = \varnothing\) for all distinct non-consecutive indices \(i, j\),
		\(V_1 \cup \dots \cup V_\ell = V\),
		\(E_i \cap E_j = \varnothing\) for all distinct indices \(i, j\), and
		\(E_1 \cup \dots \cup E_\ell = E\).
		Refer to Figure~\ref{figure:large-face}.

		\begin{figure}[htbp]
			\centering
			\def\svgwidth{7cm}
\begingroup%
  \makeatletter%
  \providecommand\color[2][]{%
    \errmessage{(Inkscape) Color is used for the text in Inkscape, but the package 'color.sty' is not loaded}%
    \renewcommand\color[2][]{}%
  }%
  \providecommand\transparent[1]{%
    \errmessage{(Inkscape) Transparency is used (non-zero) for the text in Inkscape, but the package 'transparent.sty' is not loaded}%
    \renewcommand\transparent[1]{}%
  }%
  \providecommand\rotatebox[2]{#2}%
  \newcommand*\fsize{\dimexpr\f@size pt\relax}%
  \newcommand*\lineheight[1]{\fontsize{\fsize}{#1\fsize}\selectfont}%
  \ifx\svgwidth\undefined%
    \setlength{\unitlength}{340.15748031bp}%
    \ifx\svgscale\undefined%
      \relax%
    \else%
      \setlength{\unitlength}{\unitlength * \real{\svgscale}}%
    \fi%
  \else%
    \setlength{\unitlength}{\svgwidth}%
  \fi%
  \global\let\svgwidth\undefined%
  \global\let\svgscale\undefined%
  \makeatother%
  \begin{picture}(1,0.54166667)%
    \lineheight{1}%
    \setlength\tabcolsep{0pt}%
    \put(0,0){\includegraphics[width=\unitlength,page=1]{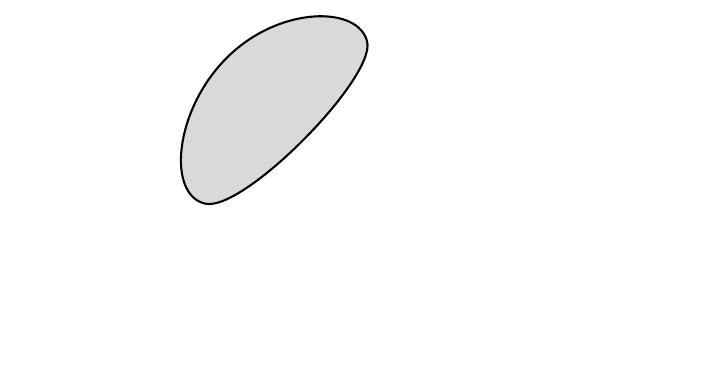}}%
    \put(0.36839759,0.39529155){\color[rgb]{0,0,0}\makebox(0,0)[t]{\lineheight{1.25}\smash{\begin{tabular}[t]{c}$G_1$\end{tabular}}}}%
    \put(0,0){\includegraphics[width=\unitlength,page=2]{drawing3.pdf}}%
    \put(0.63048267,0.39618494){\color[rgb]{0,0,0}\makebox(0,0)[t]{\lineheight{1.25}\smash{\begin{tabular}[t]{c}$G_2$\end{tabular}}}}%
    \put(0,0){\includegraphics[width=\unitlength,page=3]{drawing3.pdf}}%
    \put(0.36929099,0.12761666){\color[rgb]{0,0,0}\makebox(0,0)[t]{\lineheight{1.25}\smash{\begin{tabular}[t]{c}$G_\ell$\end{tabular}}}}%
    \put(0,0){\includegraphics[width=\unitlength,page=4]{drawing3.pdf}}%
  \end{picture}%
\endgroup%

			\caption{Outerplanar plane graph \(G\) and its innerface of size \(\ell \geq k+1\).}
			\label{figure:large-face}
		\end{figure}

		Let \(n_i = |V_i|\), \(e_i = |E_i|\).
		Then, we have \(n + \ell = n_1 + \cdots + n_\ell\) and \(e = e_1 + \cdots + e_\ell\).
		The plane graphs \(G_i\)'s are outerplanar and \(C_k\)-free.
		Hence, 
		\begin{equation}
			e_i(k^2 - 2k - 1) \leq (2k - 5) (kn_i - k - 1).
		\end{equation}

		Add these inequalities for all \(i\), then
		\begin{equation}
			\begin{aligned}
				e(k^2 - 2k - 1) & \leq (2k - 5) (k(n+\ell) - k\ell - \ell) \\
								& \leq (2k - 5) (kn - k - 1),
			\end{aligned}
		\end{equation} 
		as desired.
	\end{proof}
	\begin{case} \label{case:innerface4}
		Assume all innerfaces of \(G\) have size at most \(k - 1\).
		If \(G\) has a $(4+)$-innerface, then
		\begin{equation}
			e(k^2 - 2k - 1) \leq (2k - 5) (kn - k - 1).
		\end{equation} 
	\end{case}
	\begin{proof}
		Lemma~\ref{lemma:innerface-terminal} implies that there exists an innerface of size \(\ell\), with \(4 \leq \ell \leq k-1\), such that at least \(\ell - 1\) of its \(\ell\) surrounding triangular blocks are terminal triangular blocks.

		Denote by \(v_1, v_2, \dots, v_{\ell}\) the vertices of this innerface and denote by \(B_1 = (V_1, E_1), \dots, B_{\ell - 1} = (V_{\ell-1}, E_{\ell-1})\) the terminal triangular blocks surrounding this innerface such that \(v_i, v_{i+1} \in V_i\) for each index $1\leq i< \ell$.
		Let \(G' = (V', E')\) be the plane subgraph consisting of the remaining edges of $G$.
		Refer to Figure~\ref{figure:innerface-terminal}.

		\begin{figure}[htbp]
			\centering
			\makebox[\textwidth]{
				\begin{subfigure}[t]{9cm}
					\centering
					\def\svgwidth{7cm}
					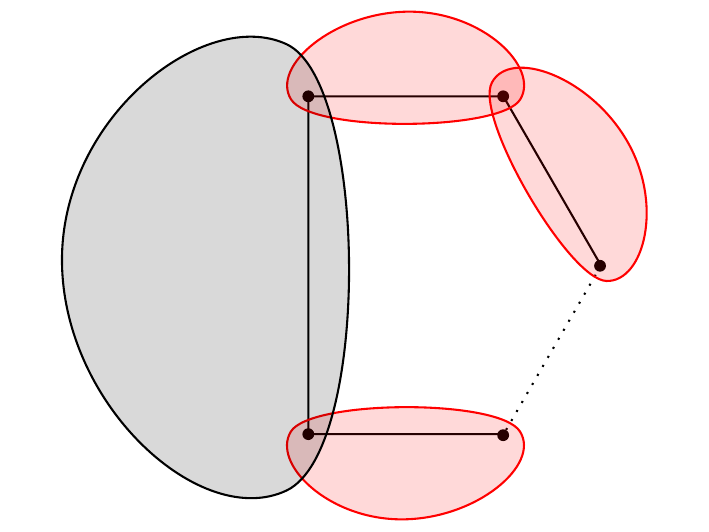
					\caption{Outerplanar plane graph \(G\) and its innerface for which all but one surrounding triangular blocks are terminal triangular blocks.}
					\label{figure:innerface-terminal}
				\end{subfigure}
				\begin{subfigure}[t]{7cm}
					\centering
					\def\svgwidth{7cm}
					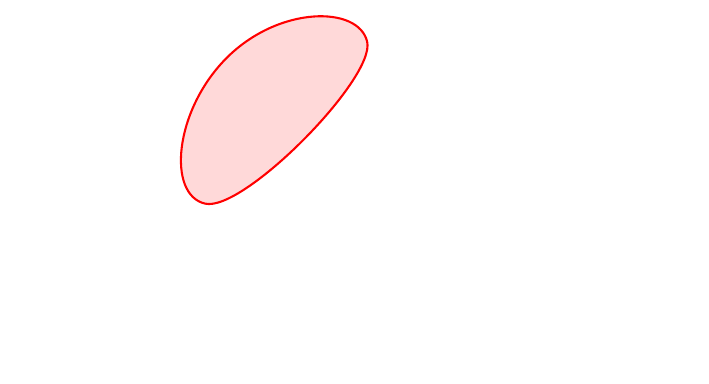
					\caption{Outerplanar plane graph \(H^*\).}
					\label{figure:tighter-face}
				\end{subfigure}
			}
			\caption{Graphs considered on the proof of Case~\ref{case:innerface4}.}
		\end{figure}

		Let \(H\) be the plane subgraph of \(G\) induced by \(V_1 \cup \cdots \cup V_{\ell - 1}\).
		The graph \(H\) is outerplanar.
		Proposition~\ref{proposition:edge-maximal:paths} implies that, for any \(\ell \leq i \leq |H|\), the graph \(H\) contains a cycle of length \(i\).
		Therefore, \(|H| < k\) .

		Let plane graph \(H^* = (V^*, E^*)\) be obtained from \(H\) by contracting the edge \((v_1v_\ell)\).
		The plane graph \(H^*\) is outerplanar, and \(|H^*| =|H|-1< k - 1\).
		Therefore, \(H^*\) has no cycle of length \(k\).
		Refer to Figure~\ref{figure:tighter-face}.

		Let \(n' = |V'|\), \(n^* = |V^*|\), \(e' = |E'|\), \(e^* = |E^*|\).
		Then, we have \(n + 1 = n' + n^*\) and \(e = e' + e^*\).
		The plane graphs \(G'\) and \(H^*\) are outerplanar and \(C_k\)-free. Hence, 
		\begin{gather}
			e'(k^2 - 2k - 1) \leq (2k - 5) (kn' - k - 1), \\
			e^*(k^2 - 2k - 1) \leq (2k - 5) (kn^* - k - 1).
		\end{gather} 
		Add these inequalities, then
		\begin{equation}
			\begin{aligned}
				e(k^2 - 2k - 1) & \leq (2k - 5) (k(n+1) -2k - 2) \\
								& < (2k - 5) (kn -k - 1)
			\end{aligned}
		\end{equation} 
		as desired.
	\end{proof}

	\begin{case}
		If \(G\) is not \(2\)-connected, then
		\begin{equation}
			e(k^2 - 2k - 1) \leq (2k - 5) (kn - k - 1).
		\end{equation} 
	\end{case}
	\begin{proof}
		Assume \(G\) is not \(2\)-connected, i.e., there exist plane subgraphs \(G_1 = (V_1, E_1)\) and \(G_2 = (V_2, E_2)\) satisfying \(|V_1| > 1, |V_2| > 1\), \(|V_1 \cap V_2| \leq 1\), \(V_1 \cup V_2 = V\), \(E_1 \cup E_2 = E\).

		Let \(n_1 = |V_1|\), \(n_2 = |V_2|\), \(e_1 = |E_1|\), \(e_2 = |E_2|\).
		Then, we have \(n + 1 \geq n_1 + n_2\) and \(e = e_1 + e_2\).
		The plane graphs \(G_1, G_2\) are outerplanar and \(C_k\)-free.
		Hence, 
		\begin{gather}
			e_1(k^2 - 2k - 1) \leq (2k - 5) (kn_1 - k - 1), \\
			e_2(k^2 - 2k - 1) \leq (2k - 5) (kn_2 - k - 1).
		\end{gather}
		Add these inequalities, then
		\begin{equation}
			\begin{aligned}
				e(k^2 - 2k - 1) & \leq (2k - 5) (k(n+1) - 2k - 2) \\
								& < (2k - 5) (kn -k - 1),
			\end{aligned}
		\end{equation}
		as desired.
	\end{proof}
	\begin{case}
		Assume \(G\) is \(2\)-connected and all of its innerfaces have size \(3\).
		Then,
		\begin{equation}
			e(k^2 - 2k - 1) \leq (2k - 5) (kn - k - 1).
		\end{equation} 
	\end{case}
	\begin{proof}
		Proposition~\ref{proposition:edge-maximal:characterization} implies that \(G\) is edge-maximal outerplanar plane graph, and \(e = 2n-3\).
		Proposition~\ref{proposition:edge-maximal:paths} implies that, for every \(3 \leq i \leq n\), there is a cycle of length \(i\) in \(G\).
		Hence, \(n \leq k-1\).
		Therefore, 
		\begin{equation}
			\begin{aligned}
				e(k^2 - 2k - 1) & =    (2n - 3)(k^2 - 2k - 1) \\
								& \leq (2k - 5)(kn - k - 1),
			\end{aligned}
		\end{equation}
		as desired.
	\end{proof}

	All possible outerplanar plane graphs \(G\) are covered in the four cases above.
	Therefore, by strong induction, the result follows.
\end{proof}

\section{Proof of Theorem \ref{t2}: Extremal Graph Construction}
\begin{figure}
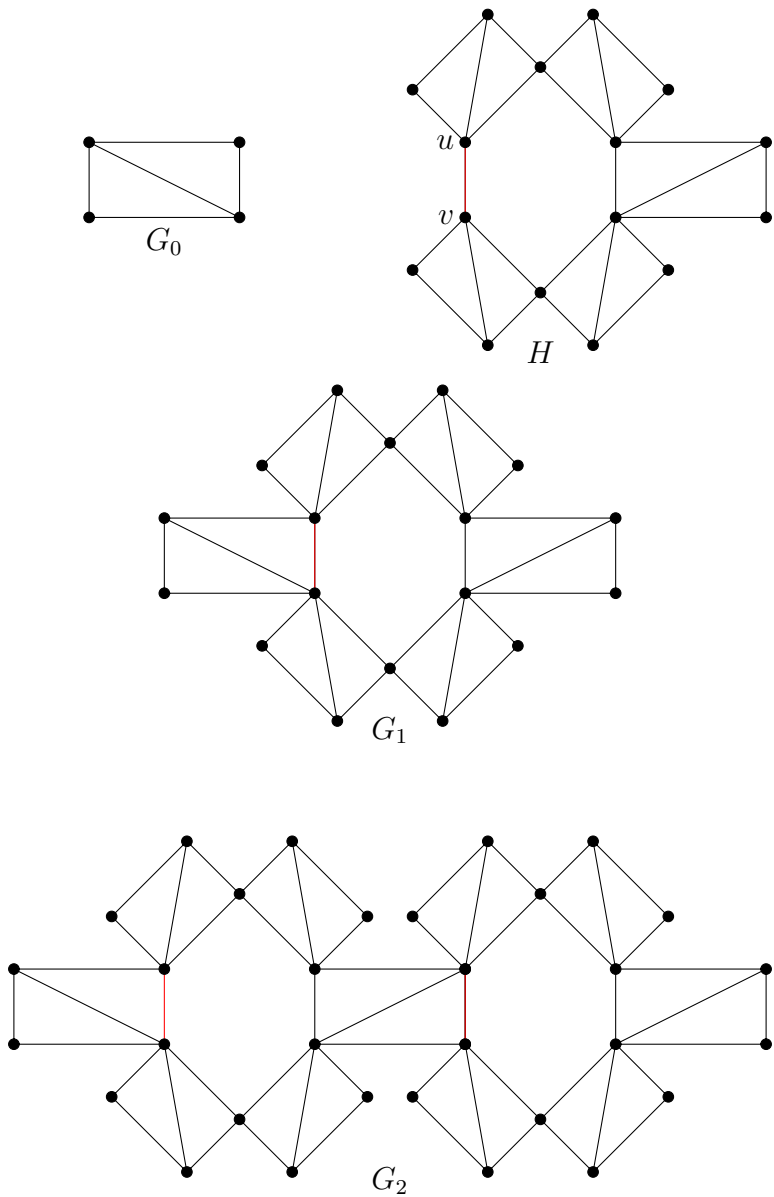

\centering

\bt
\begin{scope}[yshift=3cm,xshift=-3cm]
\df (-1,0.5)circle(2pt)--(1,0.5)circle(2pt)--(1,-0.5)circle(2pt)--(-1,-0.5)circle(2pt);
\draw (-1,-0.5)--(-1,0.5)--(1,-0.5);
\node at (0,-0.5) [anchor=90] {$G_0$};
\end{scope}
\begin{scope}[yshift=3cm,xshift=2cm]
\df(0,1.5)circle(2pt)--(0.7,2.2)circle(2pt)--(1.7,1.2)circle(2pt)--(1,0.5)circle(2pt);
\draw(0.7,2.2)--(1,0.5);
\df(1,0.5)--(3,0.5)circle(2pt)--(3,-0.5)circle(2pt)--(1,-0.5)circle(2pt);
\draw(3,0.5)--(1,-0.5);
\draw (-1,-0.5)node [anchor=0] {$v$}--(-1,0.5)node[anchor=0]{$u$}--(0,1.5)--(1,0.5)--(1,-0.5)--(0,-1.5)--cycle;
\draw[red](-1,-0.5)--(-1,0.5);
\df(0,-1.5)circle(2pt)--(0.7,-2.2)circle(2pt)--(1.7,-1.2)circle(2pt)--(1,-0.5);
\draw(0.7,-2.2)--(1,-0.5);
\df(0,1.5)--(-0.7,2.2)circle(2pt)--(-1.7,1.2)circle(2pt)--(-1,0.5)circle(2pt);
\draw(-0.7,2.2)--(-1,0.5);
\df(0,-1.5)--(-0.7,-2.2)circle(2pt)--(-1.7,-1.2)circle(2pt)--(-1,-0.5)circle(2pt);
\draw(-0.7,-2.2)--(-1,-0.5);
\node at (0,-2)[anchor=90]{$H$};
\end{scope}
\begin{scope}[yshift=-2cm]
\df(0,1.5)circle(2pt)--(0.7,2.2)circle(2pt)--(1.7,1.2)circle(2pt)--(1,0.5)circle(2pt);
\draw(0.7,2.2)--(1,0.5);
\df(1,0.5)--(3,0.5)circle(2pt)--(3,-0.5)circle(2pt)--(1,-0.5)circle(2pt);
\draw(3,0.5)--(1,-0.5);
\draw (-1,-0.5)--(-1,0.5)--(0,1.5)--(1,0.5)--(1,-0.5)--(0,-1.5)--cycle;
\draw[red](-1,-0.5)--(-1,0.5);
\df(0,-1.5)circle(2pt)--(0.7,-2.2)circle(2pt)--(1.7,-1.2)circle(2pt)--(1,-0.5);
\draw(0.7,-2.2)--(1,-0.5);
\df(0,1.5)--(-0.7,2.2)circle(2pt)--(-1.7,1.2)circle(2pt)--(-1,0.5)circle(2pt);
\draw(-0.7,2.2)--(-1,0.5);
\df(0,-1.5)--(-0.7,-2.2)circle(2pt)--(-1.7,-1.2)circle(2pt)--(-1,-0.5)circle(2pt);
\draw(-0.7,-2.2)--(-1,-0.5);
\df(-1,0.5)--(-3,0.5)circle(2pt)--(-3,-0.5)circle(2pt)--(-1,-0.5);
\draw(-3,0.5)--(-1,-0.5);
\node at (0,-2)[anchor=90]{$G_1$};
\end{scope}
\begin{scope}[yshift=-8cm,xshift=-2cm]
\df(0,1.5)circle(2pt)--(0.7,2.2)circle(2pt)--(1.7,1.2)circle(2pt)--(1,0.5)circle(2pt);
\draw(0.7,2.2)--(1,0.5);
\df(1,0.5)--(3,0.5)circle(2pt)--(3,-0.5)circle(2pt)--(1,-0.5)circle(2pt);
\draw(3,0.5)--(1,-0.5);
\draw (-1,0.5)--(0,1.5)--(1,0.5)--(1,-0.5)--(0,-1.5)--(-1,-0.5);
\draw[red](-1,-0.5)--(-1,0.5);
\df(0,-1.5)circle(2pt)--(0.7,-2.2)circle(2pt)--(1.7,-1.2)circle(2pt)--(1,-0.5);
\draw(0.7,-2.2)--(1,-0.5);
\df(0,1.5)--(-0.7,2.2)circle(2pt)--(-1.7,1.2)circle(2pt)--(-1,0.5)circle(2pt);
\draw(-0.7,2.2)--(-1,0.5);
\df(0,-1.5)--(-0.7,-2.2)circle(2pt)--(-1.7,-1.2)circle(2pt)--(-1,-0.5)circle(2pt);
\draw(-0.7,-2.2)--(-1,-0.5);
\df(-1,0.5)--(-3,0.5)circle(2pt)--(-3,-0.5)circle(2pt)--(-1,-0.5);
\draw(-3,0.5)--(-1,-0.5);

\df(4,1.5)circle(2pt)--(4.7,2.2)circle(2pt)--(5.7,1.2)circle(2pt)--(5,0.5)circle(2pt);
\draw(4.7,2.2)--(5,0.5);
\df(5,0.5)--(7,0.5)circle(2pt)--(7,-0.5)circle(2pt)--(5,-0.5)circle(2pt);
\draw(7,0.5)--(5,-0.5);
\draw (3,-0.5)--(3,0.5)--(4,1.5)--(5,0.5)--(5,-0.5)--(4,-1.5)--cycle;
\draw[red](3,-0.5)--(3,0.5);
\df(4,-1.5)circle(2pt)--(4.7,-2.2)circle(2pt)--(5.7,-1.2)circle(2pt)--(5,-0.5);
\draw(4.7,-2.2)--(5,-0.5);
\df(4,1.5)--(3.3,2.2)circle(2pt)--(2.3,1.2)circle(2pt)--(3,0.5)circle(2pt);
\draw(3.3,2.2)--(3,0.5);
\df(4,-1.5)--(3.3,-2.2)circle(2pt)--(2.3,-1.2)circle(2pt)--(3,-0.5)circle(2pt);
\draw(3.3,-2.2)--(3,-0.5);
\node at (2,-2)[anchor=90]{$G_2$};
\end{scope}
\et
\caption{\label{f4}Example of construction when $k=5$}
\end{figure}
\begin{proof}
We will do induction on the construction. Let $k\geq 3$ be a fixed integer. The base case $G_0$ can be any edge-maximal outerplanar plane graph with $k-1$ vertices and $2k-5$ edges. Now we construct outerplanar graph $H$ in this way: first we draw a face of size $k+1$, then randomly pick $k$ edges from it, and for each of these edges we attach it to a copy of $G_0$. Let the other edge on that $(k+1)$-face be called $(uv)$. In short, $H$ has $k^2-2k+1$ vertices and $2k^2-5k+1$ edges, with exactly $k$ triangular blocks of $k-1$ vertices, $1$ trivial triangular block and no other triangular blocks. Given $G_m$, the inductive step from $G_m$ to $G_{m+1}$ is the following: draw a copy of $H$ and merge $(uv)$ with any edge on the boundary of $G_m$. Let $G_m=(V_m,E_m)$, then 
\begin{align*}
|V_m|&=k-1+(m+1)(k^2-2k-1)\\
|E_m|&=(2k-5)(k+1+mk)\\
|E_m|&={(2k-5)(k|V_m|-k-1)\over k^2-2k-1}.
\end{align*}

In Figure \ref{f4}, we show an example of our construction when $k=5$.

\end{proof}

\sloppy
\bibliographystyle{abbrv}
\bibliography{outerplanarck}

\begin{thebibliography}{1}

\bibitem{dowden2016extremal}
C.~Dowden.
\newblock Extremal c4-free/c5-free planar graphs.
\newblock {\em Journal of Graph Theory}, 83(3):213--230, 2016.

\bibitem{erdos1962number}
P.~Erdos.
\newblock On the number of complete subgraphs contained in certain graphs.
\newblock {\em Magyar Tud. Akad. Mat. Kutat{\'o} Int. K{\"o}zl}, 7(3):459--464,
  1962.

\bibitem{erdos1963structure}
P.~Erd{\"o}s.
\newblock On the structure of linear graphs.
\newblock {\em Israel Journal of Mathematics}, 1:156--160, 1963.

\bibitem{fang2021outerplanar}
L.~Fang and M.~Zhai.
\newblock Outerplanar tur\'{a}n numbers of cycles and paths, 2021.

\bibitem{ghosh2022planar}
D.~Ghosh, E.~Gy\H{o}ri, R.~R. Martin, A.~Paulos, and C.~Xiao.
\newblock Planar tur{\'a}n number of the 6-cycle.
\newblock {\em SIAM Journal on Discrete Mathematics}, 36(3):2028--2050, 2022.

\bibitem{gyori2023planark}
E.~Győri, A.~Li, and R.~Zhou.
\newblock The planar tur\'an number of $\{K_4,C_5\}$ and $\{K_4,C_6\}$, 2023.

\bibitem{gyori2023planar}
E.~Győri, A.~Li, and R.~Zhou.
\newblock The planar tur\'an number of the seven-cycle.
\newblock arXiv 2307.06909, 2023.

\bibitem{lan2019extremal}
Y.~Lan, Y.~Shi, and Z.-X. Song.
\newblock Extremal theta-free planar graphs.
\newblock {\em Discrete Mathematics}, 342(12):111610, 2019.

\bibitem{shi2023planar}
R.~Shi, Z.~Walsh, and X.~Yu.
\newblock Planar tur\'an number of the 7-cycle.
\newblock arXiv 2306.13594, 2023.

\end{thebibliography}
\end{document}